\newtheorem{prop}{Proposition}
\newtheorem{lemma}[prop]{Lemma}
\newtheorem{cor}[prop]{Corollary}
\newtheorem{theorem}[prop]{Theorem}
\DeclareMathOperator{\Poly}{Poly}
\newcommand{\eps}{\epsilon}
\newcommand{\RR}{\mathbb{R}}
\title{A short proof of the multilinear Kakeya inequality}
\author{Larry Guth}
\begin{document}

\begin{abstract} We give a short proof of a slightly weaker version of the multilinear Kakeya inequality proven by Bennett, Carbery, and Tao.  \end{abstract}

\maketitle

The multilinear Kakeya inequality is a geometric estimate about the overlap pattern of cylindrical tubes in $\RR^n$ pointing in different directions.  This estimate was first proven by Bennett, Carbery, and Tao in \cite{BCT}.  Recently it has had some striking applications in harmonic analysis.  Here is a short list of some applications.  In \cite{BCT}, it was applied to prove a multilinear restriction estimate.  In \cite{BG}, Bourgain and the author used this multilinear restriction estimate to make some progress on the original restriction problem, posed by Stein in \cite{Ste}.   In \cite{B}, Bourgain used it to prove new estimates for eigenfunctions of the Laplacian on flat tori.  Most recently, in \cite{BD}, Bourgain and Demeter used the multilinear restriction estimate to prove the $l^2$ decoupling conjecture.  As a corollary of their main result, they proved essentially sharp Strichartz estimates for the Schrodinger equation on flat tori.

The goal of this paper is to give a short proof of the multilinear Kakeya inequality.  The original proof of \cite{BCT} used monotonicity properties of heat flow and it is morally based on multiscale analysis.  Later there was a proof in \cite{G} using the polynomial method.  The proof we give here is based on multiscale analysis.  I think that the underlying idea is the same as in \cite{BCT}, but the argument is organized in a more concise way.

Here is the statement of the multilinear Kakeya inequality.  Suppose that $l_{j,a}$ are lines in $\RR^n$, where $j = 1, ..., n$, and where $a = 1, ..., N_j$.  We write $T_{j,a}$ for the characteristic function of the 1-neighborhood of $l_{j,a}$.  

\begin{theorem} \label{main} Suppose that $l_{j,a}$ are lines in $\RR^n$, and that each line $l_{j,a}$ makes an angle of at most $(10 n)^{-1}$ with the $x_j$-axis.

Let $Q_S$ denote any cube of side length $S$.  Then for any $\eps > 0$ and any $S \ge 1$, the following integral inequality holds:

\begin{equation} \label{multkak}  \int_{Q_S} \prod_{j=1}^n \left( \sum_{a = 1}^{N_j} T_{j,a} \right)^{\frac{1}{n-1}}  \le C_\eps S^\eps \prod_{j=1}^n N_j^{\frac{1}{n-1}}. \end{equation}
\end{theorem}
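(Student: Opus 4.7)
The plan is to induct on the side length $S$. Let $A(S)$ denote the smallest constant for which
\[
\int_{Q_S} \prod_{j=1}^n \Big(\sum_{a=1}^{N_j} T_{j,a}\Big)^{1/(n-1)} \leq A(S) \prod_{j=1}^n N_j^{1/(n-1)}
\]
holds for every admissible tube configuration; the target is $A(S) \leq C_\eps S^\eps$.

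For the base case $S \leq 10n$, the angle hypothesis $\leq (10n)^{-1}$ forces each tube $T_{j,a}$ restricted to $Q_S$ to lie within unit transverse distance of a line exactly parallel to the $x_j$-axis. Thus $\sum_a T_{j,a}|_{Q_S}$ is, up to a constant factor, a function only of the $n-1$ coordinates $\pi_j x$ transverse to $e_j$, and the classical Loomis--Whitney inequality
\[
\int \prod_{j=1}^n f_j(\pi_j x)^{1/(n-1)}\,dx \leq \prod_{j=1}^n \|f_j\|_{L^1}^{1/(n-1)}
\]
immediately yields $A(S) \leq C(n)$.

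For the inductive step at larger $S$, I would partition $Q_S$ into $K^n$ subcubes $q$ of side $S/K$, for a parameter $K$ to be tuned depending on $\eps$. Applying the inductive hypothesis on each $q$ and summing gives
\[
\int_{Q_S} \prod_j \Big(\sum_a T_{j,a}\Big)^{1/(n-1)} \leq A(S/K)\sum_q \prod_j N_{j,q}^{1/(n-1)},
\]
where $N_{j,q}$ counts direction-$j$ tubes meeting $q$. The key step is a discrete Loomis--Whitney-type estimate for this tube-count sum, which morally identifies $\sum_q \prod_j N_{j,q}^{1/(n-1)}$ (after the rescaling $x \mapsto x/K$) with the multilinear Kakeya integral at scale $S/K$ for the ``$K$-fattened'' tubes, i.e. the $K$-neighborhoods of the same lines $l_{j,a}$, which still satisfy the angle hypothesis. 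This produces a recursion $A(S) \leq C(n,K)\, A(S/K)$; iterating $\sim \log_K S$ times and choosing $K = K(\eps)$ large enough for the per-scale loss to be absorbed extracts $A(S) \leq C_\eps S^\eps$.

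The main obstacle is the discrete Loomis--Whitney step. Tubes can bend over distances comparable to $S$, so $N_{j,q}$ is only approximately a function of the transverse grid coordinate $\pi_j q$; one must majorize $N_{j,q}$ by a column-wise maximum and verify that each bent tube meets only $O(1)$ transverse columns. Tracking the constants through the iterated recursion carefully enough to extract $S^\eps$ growth, rather than merely polynomial growth in $S$, is the delicate quantitative heart of the argument.
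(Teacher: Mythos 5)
Your high-level scheme --- partition the cube, apply the estimate at the smaller scale, and close the recursion with a Loomis--Whitney step --- is indeed the core of the paper's argument (the paper runs the same iteration in the equivalent dual form of fattening the tubes while keeping the cube fixed). But there is a genuine gap at the step you flag as the ``delicate quantitative heart,'' and the fix is not merely careful bookkeeping: a necessary preliminary reduction is missing.

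The problem is with the claim that each tube meets only $O(1)$ transverse columns. You subdivide $Q_S$ into subcubes $q$ of side $S/K$ and want to choose $K = K(\eps)$ large. A tube whose axis makes angle $\theta$ with $e_j$ drifts transversally by about $S\theta$ across $Q_S$, so it crosses roughly $S\theta/(S/K) = K\theta$ columns of side $S/K$. At the fixed angle $\theta = (10n)^{-1}$ of the theorem this is $\approx K/(10n)$ columns, which grows linearly in $K$. Consequently, when you majorize $N_{j,q}$ by a column-wise function $g_j(\pi_j q)$ and apply discrete Loomis--Whitney, you get $\| g_j\|_{\ell^1} \lesssim (K\theta)\, N_j$ rather than $O(N_j)$, and the per-scale constant in the recursion $A(S) \le C(n,K)\,A(S/K)$ is $C(n,K) \sim (K\theta)^{n/(n-1)}$. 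Iterating $\log_K S$ times then yields $S^{\log_K C(n,K)} \approx S^{n/(n-1)}$, not $S^\eps$. So the ``$O(1)$ transverse columns'' step is not a verification to be carried out --- it is false in the regime you need, and the exponent does not improve as $K\to\infty$.

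The missing idea is the reduction to nearly axis-parallel tubes, which the paper performs first (Subsection \ref{subsecreducsmallangle}, reducing Theorem \ref{main} to Theorem \ref{maindelta}): given $\eps>0$, one chooses a tiny angle $\delta$, covers the directions by $O_\eps(1)$ caps of radius $\sim\delta$, and treats each $n$-tuple of caps separately after a bounded linear change of variables. Once the angle is $\le\delta$, each tube crosses only $\sim K\delta$ columns, so taking $K\sim\delta^{-1}$ restores the $O(1)$ column count and a per-step constant $C_n$ depending only on $n$. Then the number of iteration steps is $\sim\log_{\delta^{-1}} S$, the total loss is $C_n^{\log_{\delta^{-1}} S} = S^{C_n/\log\delta^{-1}}$, and choosing $\delta$ small makes the exponent $\le\eps$. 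In short: the subdivision parameter $K$ and the angle are coupled ($K\theta = O(1)$ is forced), so $K$ cannot be made large without first making the angle small. Your base case at $S\le 10n$ is fine, but the rest of the induction does not close without this angle reduction.
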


Theorem \ref{main} is slightly weaker than the estimates proven in \cite{BCT} or \cite{G}.  The strongest form of the estimate was proven in \cite{G}: equation \ref{multkak} holds without the factor $S^\eps$.  However, Theorem \ref{main} is exactly what \cite{BCT} use to prove the multilinear restriction theorem, Theorem 1.16 in \cite{BCT}.  (See Proposition 2.1 and the following paragraph in \cite{BCT}.)  

There is a similar geometric setup in harmonic analysis papers by Bennett and Bez \cite{BB} and Bejenaru, Herr, and Tataru \cite{BHT}.   There are also some related ideas in recent work by Cs{\H o}rnyei and Jones \cite{CJ} in geometric measure theory, connected with the tangent spaces of nullsets and the sets of non-differentiability of Lipschitz functions.  I heard Marianna Cs{\H o}rnyei give a talk about their work, and their approach helped suggest the argument in this paper.

\section{The proof of the multilinear Kakeya inequality}

In this section, we prove Theorem \ref{main}.

\subsection{Reduction to nearly axis parallel tubes}  \label{subsecreducsmallangle}

The first observation of \cite{BCT} is that it suffices to prove this type of estimate when the angle $(10 n)^{-1}$ is replaced by an extremely small angle $\delta$.   More precisely, Theorem \ref{main} follows from 

\begin{theorem} \label{maindelta} For every $\eps > 0$, there is some $\delta > 0$ so that the following holds.  

Suppose that $l_{j,a}$ are lines in $\RR^n$, and that each line $l_{j,a}$ makes an angle of at most $\delta$ with the $x_j$-axis.  Then for any $S \ge 1$ and any cube $Q_S$ of side length $S$, the following integral inequality holds:

\begin{equation} \label{multkakdelta} \int_{Q_S} \prod_{j=1}^n \left( \sum_{a = 1}^{N_j}  T_{j,a} \right)^{\frac{1}{n-1}}  \lesssim_\eps S^\eps \prod_{j=1}^n N_j^{\frac{1}{n-1}}. 
\end{equation}

\end{theorem}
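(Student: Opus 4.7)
\emph{Proof proposal.} My plan is to prove Theorem~\ref{maindelta} by a multiscale induction on $S$, with $\delta$ chosen small in terms of $\eps$ so that the loss accumulated across scales amounts to at most $S^\eps$. Let
$$A(S):=\sup\frac{\int_{Q_S}\prod_{j=1}^n\left(\sum_{a}T_{j,a}\right)^{1/(n-1)}}{\prod_{j=1}^n N_j^{1/(n-1)}},$$
where the supremum is over all cubes $Q_S$ of side $S$ and all line configurations $\{l_{j,a}\}$ satisfying the $\delta$-angle hypothesis. The aim is to show $A(S)\lesssim_{n,\eps} S^{\eps}$.

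Base case, $S\le S_0:=1/\delta$. Each tube $T_{j,a}\cap Q_S$ drifts transversally by at most $\delta S\le 1$, so it is contained in an axis-parallel slab of radius $O(1)$. Hence $\sum_a T_{j,a}$ is dominated pointwise on $Q_S$ by a function of $x_{\hat j}$ alone, and the classical Loomis--Whitney inequality gives $A(S)\le C(n)$ at these scales.

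Inductive step, $S>S_0$. Partition $Q_S$ into $2^n$ sub-cubes $Q^{(k)}$, $k\in\{0,1\}^n$, of side $S/2$, and apply the inductive estimate on each to obtain
$$\int_{Q_S}\prod_j \left(\sum_a T_{j,a}\right)^{1/(n-1)} \le A(S/2)\sum_{k}\prod_j \left(N_j^{(k)}\right)^{1/(n-1)},$$
where $N_j^{(k)}$ counts the $T_{j,a}$ meeting $Q^{(k)}$. Setting $f_j(k_{\hat j}):=\#\{a:T_{j,a}\text{ enters the coarse column with transverse index }k_{\hat j}\}$, we have $N_j^{(k)}\le f_j(k_{\hat j})$, so the discrete Loomis--Whitney inequality yields
$$\sum_k\prod_j (N_j^{(k)})^{1/(n-1)}\le\sum_k\prod_j f_j(k_{\hat j})^{1/(n-1)}\le\prod_j\|f_j\|_{\ell^1}^{1/(n-1)}.$$
A tube at angle $\le\delta$ drifts at most $\delta S$ across $Q_S$, so it lies in at most $(1+O(\delta))^{n-1}$ coarse columns in the $x_j$-direction, yielding $\|f_j\|_{\ell^1}\le (1+O(\delta))^{n-1}N_j$ and hence $A(S)\le (1+C_n\delta)^n A(S/2)$. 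Iterating $\log_2(S/S_0)$ times gives $A(S)\le A(S_0)\,S^{O_n(\delta)}$; choosing $\delta=c(n)\eps$ produces the desired $A(S)\lesssim_{n,\eps}S^{\eps}$.

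The main obstacle, as I anticipate it, is ensuring that the per-scale factor degrades like $(1+C_n\delta)$ rather than an absolute constant: a naive application of discrete Loomis--Whitney yields only an $O_n(1)$ factor per scale, which iterates to polynomial growth in $S$ independent of $\eps$. The delicate point is to use the small-angle hypothesis to show that each tube lies in essentially a single coarse column per transverse direction, so that $\|f_j\|_{\ell^1}$ exceeds $N_j$ only by $1+O(\delta)$; boundary effects between columns will need careful bookkeeping.
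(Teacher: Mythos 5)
Your inductive step has a genuine gap at precisely the point you flag as delicate: the claim $\|f_j\|_{\ell^1} \le (1+O(\delta))^{n-1} N_j$ cannot hold for a fixed dyadic partition. A tube $T_{j,a}$ restricted to $Q_S$ has a transverse footprint of diameter $\sim 2 + \delta S$, which is indeed tiny compared to the column width $S/2$, but nothing prevents every tube from sitting astride a corner of the subdivision; in that worst case a single tube enters $2^{n-1}$ columns, so $\|f_j\|_{\ell^1}$ can be as large as $2^{n-1} N_j$ and the per-step loss is an absolute constant $C_n > 1$, not $1 + O(\delta)$. Iterating over $\log_2(S/S_0)$ halvings then gives $A(S) \lesssim (S/S_0)^{\log_2 C_n}$, a polynomial loss independent of $\eps$ --- exactly the failure mode you anticipate. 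This is not a bookkeeping issue that more care will dissolve; it needs a new idea. (A randomly translated grid would make the \emph{expected} overcount $1 + O(\delta + 1/S)$, but you would then have to justify selecting a good translate, and your definition of $A(S)$ fixes the partition once $Q_S$ is given.)

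The paper sidesteps the boundary loss with two devices, and it is worth noting why both matter. First, it rescales by $\delta^{-1}$ rather than $2$ per step, so only $M \sim \log S / \log \delta^{-1}$ iterations are needed; an $O_n(1)$ loss per step then accumulates to only $C_n^M = S^{C_n/\log\delta^{-1}} \le S^\eps$ once $\delta$ is small. Second --- and this is the idea your argument is missing --- after applying Loomis--Whitney on a subcube $Q$ of side $\sim \delta^{-1}W$ to get $C_n W^n \prod_j N_j(Q)^{1/(n-1)}$, the paper does not try to count tubes in columns; it uses the fact that if $T_{j,a,W}$ meets $Q$ then the fattened tube $T_{j,a,\delta^{-1}W}$ is identically $1$ on all of $Q$, so $N_j(Q) \le \sum_a T_{j,a,\delta^{-1}W}(x)$ pointwise on $Q$. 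The thicker tube at the coarser scale absorbs all boundary and corner ambiguity with no loss at all. If you want to keep your overall structure, you would need to carry a scale-dependent thickening of the tubes through the induction, at which point you essentially recover Lemma~\ref{mainlemma}.
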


We explain how Theorem \ref{maindelta} implies Theorem \ref{main}.  Let $e_j$ be the unit vector in the $x_j$ direction, and let $S_j \subset S^{n-1}$ be a spherical cap around the point $e_j$ of radius $(10 n)^{-1}$.  By the hypotheses of Theorem \ref{main}, every line in $l_{j,a}$ has direction in the cap $S_j$.  

Given $\eps > 0$, we choose $\delta > 0$ as in Theorem \ref{maindelta}.

We subdivide the cap $S_j$ into smaller caps $S_{j, \beta}$ of radius $\delta/10$.  The number of caps $S_{j, \beta}$ is at most $\Poly(\delta^{-1}) \lesssim_\eps 1$.  We can break the left-hand side of Equation \ref{multkakdelta} into contributions from different caps $S_{j, \beta}$.  We write $l_{j,a} \in S_{j, \beta}$ if the direction of $l_{j,a}$ lies in $S_{j, \beta}$.  Since the number of caps is $\lesssim_\eps 1$, we get:

\begin{equation}  \label{decompbeta}  \int_{Q_S} \prod_{j=1}^n \left( \sum_{a = 1}^{N_j}  T_{j,a} \right)^{\frac{1}{n-1}} \lesssim_\eps 
\sum_{\beta_1, ..., \beta_n} \int_{Q_S} \prod_{j=1}^n \left( \sum_{l_{j,a} \in S_{j, \beta_j}}  T_{j,a} \right)^{\frac{1}{n-1}}. \end{equation}

We claim that each term on the right hand side of Equation \ref{decompbeta} is controlled by Theorem \ref{maindelta}.

If we choose $\beta_j$ so that $S_{j, \beta_j}$ contains $e_j$, then Theorem 2 directly applies.  If not, we have to make a linear change of coordinates, mapping the center of $S_{j, \beta_j}$ to $e_j$.  The condition that the angle between $l_{j,a}$ and $e_j$ is at most $(10 n)^{-1}$ guarantees that this linear change of coordinates distorts lengths by at most a factor of 2 and distorts volumes by at most a factor of $2^n$.  In the new coordinates, the integral is controlled by Theorem 2.

\subsection{The axis parallel case (Loomis-Whitney)}  \label{subsecaxispar}

We have just seen that the multilinear Kakeya inequality reduces to a nearly axis-parallel case.  Next we consider the exactly axis-parallel case: the case that $l_{j,a}$ is parallel to the $x_j$-axis.  In this axis-parallel case, the multilinear Kakeya inequality follows immediately from the Loomis-Whitney inequality, proven in \cite{LW}.  To state their result, we need a little notation.

Let $\pi_j: \RR^n \rightarrow \RR^{n-1}$ be the linear map that forgets the $j^{th}$ coordinate: 

$$ \pi_j( x_1, ..., x_n) = (x_1, ..., x_{j-1}, x_{j+1}, ..., x_n). $$

\begin{theorem} (Loomis-Whitney)  Suppose that $f_j: \RR^{n-1} \rightarrow \RR$ are (measurable) functions.  Then the following integral inequality holds:

$$ \int_{\RR^n} \prod_{j=1}^n f_j \left( \pi_j(x) \right)^\frac{1}{n-1} \le \prod_{j=1}^n \| f_j \|_{L^1(\RR^{n-1})}^{\frac{1}{n-1}}. $$

\end{theorem}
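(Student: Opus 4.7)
The plan is to prove the Loomis--Whitney inequality by induction on the dimension $n$, with slicing and H\"older's inequality doing all the work. The base case $n = 2$ collapses to $\int_{\RR^2} f_1(x_2) f_2(x_1)\, dx_1\, dx_2 = \|f_1\|_{L^1(\RR)} \|f_2\|_{L^1(\RR)}$, which is just Fubini.

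For the inductive step, write $x = (\tilde x, t)$ with $\tilde x \in \RR^{n-1}$ and $t = x_n$. The key feature is that $f_n(\pi_n(x)) = f_n(\tilde x)$ does not depend on $t$, while for $j < n$ we can write $f_j(\pi_j(x)) = h_j^t(\pi'_j(\tilde x))$, where $\pi'_j \colon \RR^{n-1} \to \RR^{n-2}$ forgets the $j$-th coordinate and $h_j^t(z) := f_j(z, t)$. For each fixed $t$, I apply H\"older to the slice integral with conjugate exponents $p = n-1$ and $q = (n-1)/(n-2)$ in order to peel off the $f_n$ factor:
$$\int_{\RR^{n-1}} f_n(\tilde x)^{\frac{1}{n-1}} \prod_{j=1}^{n-1} h_j^t(\pi'_j(\tilde x))^{\frac{1}{n-1}} \, d\tilde x \le \Bigl(\int f_n\Bigr)^{\frac{1}{n-1}} \Bigl( \int \prod_{j=1}^{n-1} h_j^t(\pi'_j(\tilde x))^{\frac{1}{n-2}} \, d\tilde x \Bigr)^{\frac{n-2}{n-1}}.$$
The second factor is now precisely an $(n-1)$-dimensional Loomis--Whitney configuration (there are $n-1$ functions on $\RR^{n-2}$, each raised to $1/(n-2)$), so the inductive hypothesis bounds it by $\prod_{j<n} \|h_j^t\|_{L^1(\RR^{n-2})}^{1/(n-2)}$. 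After taking the $(n-2)/(n-1)$-th power, the slice integral is at most $\|f_n\|_{L^1(\RR^{n-1})}^{1/(n-1)} \prod_{j=1}^{n-1} \|h_j^t\|_{L^1(\RR^{n-2})}^{1/(n-1)}$.

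Integrating in $t$ and applying H\"older once more, now to $n-1$ factors with equal exponents $n-1$, gives
$$\int_\RR \prod_{j=1}^{n-1} \|h_j^t\|_{L^1(\RR^{n-2})}^{\frac{1}{n-1}} \, dt \le \prod_{j=1}^{n-1} \Bigl(\int_\RR \|h_j^t\|_{L^1(\RR^{n-2})} \, dt\Bigr)^{\frac{1}{n-1}} = \prod_{j=1}^{n-1} \|f_j\|_{L^1(\RR^{n-1})}^{\frac{1}{n-1}},$$
the last equality by Fubini. Combining the two bounds yields the desired inequality.

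The only real choice in the argument is the pair of H\"older exponents at the first step. We need $p/(n-1) = 1$ so that $f_n$ appears to the first power in its norm, and simultaneously $q/(n-1) = 1/(n-2)$ so that the remaining factors match the exponent required by the lower-dimensional Loomis--Whitney. Both conditions force $p = n-1$ and $q = (n-1)/(n-2)$, which are indeed conjugate. Once this is in place, the rest is bookkeeping.
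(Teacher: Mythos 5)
Your proof is correct, but it is worth noting that the paper itself does not prove the Loomis--Whitney inequality at all: it states the theorem and cites the original 1949 paper of Loomis and Whitney as reference, then immediately moves on to apply it. So there is no in-paper proof to compare against; you have supplied something the author deliberately omitted.

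That said, the argument you give is the standard textbook proof by induction on dimension, slicing off the last coordinate and using H\"older twice (once on each fixed slice to peel off $f_n$ and invoke the $(n-1)$-dimensional case, once across slices to reassemble the $L^1$ norms of the $f_j$ for $j<n$). Your exponent bookkeeping is right: the slice-level H\"older needs conjugate exponents $p=n-1$ and $q=(n-1)/(n-2)$ so that $f_n$ appears to power $1$ and the remaining factors land at power $1/(n-2)$, which is exactly what the inductive hypothesis in $\RR^{n-1}$ wants; the cross-slice H\"older with $n-1$ equal exponents $n-1$ then closes the induction via Fubini. The base case $n=2$ is indeed just Fubini. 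The only cosmetic point is that the inequality should really be stated and proved for $|f_j|$ (or nonnegative $f_j$), since the left side could be negative for signed $f_j$ while you invoke H\"older as if the integrands are nonnegative; in the paper's application the $f_j$ are sums of indicator functions, so this does not matter. Incidentally, Loomis and Whitney's original 1949 argument is combinatorial (a counting argument for finite unions of lattice cubes, followed by a limiting step), so your analytic proof is a genuinely different and arguably cleaner route, though both are by now classical.
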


If the line $l_{j,a}$ is parallel to the $x_j$-axis, then it can be defined by writing $\pi_j(x) = y_a$ for some $y_a \in \RR^{n-1}$.  Then the function $\sum_{a} T_{j,a}(x)$ is equal to $\sum_{a} \chi_{B(y_a, 1)} (\pi_j (x))$.  We apply the Loomis-Whitney inequality with $f_j = \sum_{a} \chi_{B(y_a, 1)}$, which gives $\| f_j \|_{L^1(\RR^{n-1})} = \omega_{n-1} N_j \sim N_j$.  We get:

$$ \int_{\RR^n} \prod_{j=1}^n \left( \sum_{a = 1}^{N_j}  T_{j,a} \right)^{\frac{1}{n-1}} =
\int_{\RR^n} \prod_{j=1}^n f_j (\pi_j(x))^{\frac{1}{n-1}} \le \prod_{j=1}^n \| f_j \|_{L^1(\RR^{n-1})} \sim \prod_{j=1}^n N_j^{\frac{1}{n-1}}. $$

\subsection{The multiscale argument}

In this subsection, we will prove Theorem \ref{maindelta} - the multilinear Kakeya inequality for tubes that make a tiny angle with the coordinate axes.  For any $\eps > 0$, we get to choose $\delta > 0$, and we know that each line $l_{j,a}$ makes an angle of at most $\delta$ with the $x_j$-axis.  
We have seen how to prove the inequality for lines that are exactly parallel to the axes.  We just have to understand how to control the effect of a tiny tilt in the tubes.  

The main idea of the argument is that instead of trying to directly perform an estimate at scale $S$, we work at a sequence of scales $\delta^{-1}, \delta^{-2}, \delta^{-3}, $ etc.  up to an arbitrary scale $S$.  To get from each scale to the next scale, we use the Loomis-Whitney inequality.

To set up our multiscale argument, we use not just tubes of radius 1 but tubes of multiple scales.  We define

$$ T_{j,a, W} := \textrm{the characteristic function of the $W$-neighborhood of } l_{j,a}. $$

$$ f_{j, W} := \sum_{a = 1}^{N_j} T_{j, a , W}. $$

The key step in the argument is the following lemma relating one scale to a scale $\delta^{-1}$ times larger:

\begin{lemma} \label{mainlemma} Suppose that $l_{j,a}$ are lines with angle at most $\delta$ from the $x_j$ axis.  Let $T_{j, a, W}$ and $f_{j, W}$ be as above.  

If $S \ge \delta^{-1} W$, and if $Q_S$ is any cube of sidelength $S$, then

$$ \int_{Q_S} \prod_{j=1}^n f_{j, W}^\frac{1}{n-1} \le C_n \delta^n \int_{Q_S} \prod_{j=1}^n f_{j, \delta^{-1} W}^\frac{1}{n-1}. $$

\end{lemma}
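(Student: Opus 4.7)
The plan is to partition $Q_S$ into a grid of sub-cubes $Q'$ of side length $\rho := c_n \delta^{-1} W$ for a small dimensional constant $c_n$ (say $c_n = 1/(4\sqrt{n})$), apply the axis-parallel Loomis--Whitney inequality on each $Q'$, and sum. The hypothesis $S \ge \delta^{-1} W$ ensures that $Q_S$ comfortably contains at least one such sub-cube, and any boundary sub-cubes of slightly smaller size cause only a harmless change of constant since all integrands are non-negative.

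Two geometric observations drive the estimate on a single $Q'$. First, inside $Q'$ every tilted tube $T_{j,a,W}$ is essentially axis-parallel: because $l_{j,a}$ makes angle at most $\delta$ with the $x_j$-axis and $\mathrm{diam}(Q') \lesssim_n \delta^{-1} W$, the line deviates from a fixed axis-parallel line by at most $O(W)$ across $Q'$, so $T_{j,a,W} \cap Q'$ is contained in an axis-parallel tube $R_{j,a}$ of radius $C_n W$. The indicator $\sum_a \chi_{R_{j,a}}(x)$ factors through $\pi_j$: writing it as $h_j(\pi_j x)$, Loomis--Whitney applied to the functions $h_j \chi_{\pi_j Q'}$ on $\RR^{n-1}$ gives
$$\int_{Q'} \prod_{j=1}^n f_{j,W}^{1/(n-1)} \le \prod_{j=1}^n \| h_j \chi_{\pi_j Q'} \|_{L^1(\RR^{n-1})}^{1/(n-1)} \le C_n W^n \prod_{j=1}^n N_j(Q')^{1/(n-1)},$$
where $N_j(Q')$ counts the lines $l_{j,a}$ whose $W$-tube meets $Q'$. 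The last step uses that $\pi_j R_{j,a}$ is a ball of radius $O(W)$ in $\RR^{n-1}$, hence has volume $\lesssim_n W^{n-1}$.

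The second observation is that $N_j(Q')$ is controlled pointwise by $f_{j,\delta^{-1}W}$ on $Q'$: if $T_{j,a,W}$ meets $Q'$, then for every $x \in Q'$ we have $\mathrm{dist}(x, l_{j,a}) \le W + \mathrm{diam}(Q') \le \delta^{-1} W$ (this is where the choice of $c_n$ enters). Hence $N_j(Q') \le f_{j,\delta^{-1}W}(x)$ for every $x \in Q'$, and averaging over $Q'$ yields
$$\prod_{j=1}^n N_j(Q')^{1/(n-1)} \le \frac{1}{|Q'|} \int_{Q'} \prod_{j=1}^n f_{j,\delta^{-1}W}^{1/(n-1)}.$$
Combining with the Loomis--Whitney bound, using $W^n / |Q'| = c_n^{-n}\delta^n$, and summing over the tiling of $Q_S$ gives the claimed inequality with $C_n \sim c_n^{-n}$.

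The main obstacle, or at least the step that actually uses the angular hypothesis, is the first observation: verifying that $T_{j,a,W} \cap Q'$ fits inside an axis-parallel tube of radius only $O(W)$, rather than $O(\delta^{-1}W)$. This forces the correct factor $W^n$ out of Loomis--Whitney, which when divided by $|Q'| = (c_n\delta^{-1}W)^n$ produces exactly the factor $\delta^n$ demanded by the lemma. The remaining ingredients---the application of Loomis--Whitney, the pointwise domination $N_j(Q') \le f_{j,\delta^{-1}W}$, and the cube summation---are routine.
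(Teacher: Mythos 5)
Your argument is essentially identical to the paper's: tile $Q_S$ by subcubes $Q'$ of side $\sim \delta^{-1} W$, replace each tilted $W$-tube inside $Q'$ by an axis-parallel tube of radius $O(W)$, apply Loomis--Whitney, and dominate the tube count $N_j(Q')$ pointwise on $Q'$ by $f_{j,\delta^{-1}W}$. One small caveat: your dismissal of undersized boundary subcubes ``since all integrands are non-negative'' is not the right reason --- the constant you extract is $W^n/|Q'|$, which \emph{grows} as $|Q'|$ shrinks --- and the paper sidesteps this by letting the subcube side lie anywhere in the interval $\left[\tfrac{1}{20n}\delta^{-1}W, \tfrac{1}{10n}\delta^{-1}W\right]$, which guarantees an exact tiling of $Q_S$ (using $S \ge \delta^{-1}W$) with a lower bound on $|Q'|$.
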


\begin{proof} We divide $Q_S$ into subcubes $Q$ of side length between $\frac{1}{20n} \delta^{-1} W$ and $\frac{1}{10n} \delta^{-1} W$.  For each such cube $Q$, it suffices to prove that

$$ \int_{Q} \prod_{j=1}^n f_{j, W}^\frac{1}{n-1} \le C_n \delta^n \int_{Q} \prod_{j=1}^n f_{j, \delta^{-1} W}^\frac{1}{n-1}. $$

Because the side length of $Q$ is $\le (1/10n) \delta^{-1} W$, the intersection of $T_{j, a, W}$ with $Q$ looks fairly similar to an axis-parallel tube.  More precisely, for each $j,a$ there is an axis-parallel tube $\tilde T_{j,a, 2W}$, of radius $2 W$, so that for $x \in Q$, $T_{j, a, W} (x) \le \tilde T_{j, a, 2 W}(x)$.  Therefore, 

$$ \int_{Q} \prod_{j=1}^n f_{j, W}^\frac{1}{n-1} = \int_{Q} \prod_{j=1}^n \left( \sum_a T_{j, a, W} \right)^\frac{1}{n-1} \le   \int_{Q} \prod_{j=1}^n \left( \sum_a \tilde T_{j, a, 2W} \right)^\frac{1}{n-1} . $$

This last integral involves axis-parallel tubes and we can bound it using the Loomis-Whitney inequality, as in Subsection \ref{subsecaxispar}.  Let $N_j(Q)$ be the number of tubes $T_{j, a, W}$ that intersect $Q$.  By Loomis-Whitney, we get 

$$  \int_{Q} \prod_{j=1}^n \left( \sum_a \tilde T_{j, a, 2W} \right)^\frac{1}{n-1} \le C_n W^n \prod_{j=1}^n N_j(Q)^\frac{1}{n-1}. $$

Since the sidelength of $Q$ is at most $(1/ 10n) \delta^{-1} W$, the diameter of $Q$ is at most $(1/10) \delta^{-1} W$.  If $T_{j, a, W}$ intersects $Q$, then $T_{j, a, \delta^{-1} W}$ is identically 1 on $Q$.  Therefore,

$$ C_n W^n \prod_{j=1}^n N_j(Q)^\frac{1}{n-1} \le C_n W^n \oint_Q \prod_{j=1}^n \left( \sum_{a} T_{j, a, \delta^{-1} W} \right)^\frac{1}{n-1}  = C_n \delta^n \int_Q \prod_{j=1}^n \left( \sum_{a} T_{j, a, \delta^{-1} W} \right)^\frac{1}{n-1}. $$

This finishes the proof of the lemma.   \end{proof}

Using Lemma \ref{mainlemma}, we can now prove Theorem \ref{maindelta}.
Suppose first that $S$ is a power $S = \delta^{-M}$, for an integer $M \ge 0$ .  Using Lemma \ref{mainlemma} repeatedly we get:

$$ \int_{Q_S} \prod_{j=1}^n \left( \sum_{a} T_{j,a} \right)^\frac{1}{n-1} = \int_{Q_S} \prod_{j=1}^n f_{j, 1}^\frac{1}{n-1} \le C_n \delta^n \int_{Q_S} \prod_{j=1}^n f_{j, \delta^{-1}}^\frac{1}{n-1} \le $$

$$ \le C_n^2 \delta^{2n} \int_{Q_S} \prod_{j=1}^n f_{j, \delta^{-2}}^\frac{1}{n-1} \le ...
\le $$

$$ \le C_n^M \delta^{Mn} \int_{Q_S} \prod_{j=1}^n f_{j, \delta^{-M}}^\frac{1}{n-1} = C_n^M \oint_{Q_S} \prod_{j=1}^n f_{j, \delta^{-M}}^\frac{1}{n-1} . $$

We know that $f_{j, \delta^{-M}} \le N_j$, and so we get

$$  \int_{Q_S} \prod_{j=1}^n \left( \sum_{a} T_{j,a} \right)^\frac{1}{n-1} \le C_n^M \prod_{j=1}^n N_j^\frac{1}{n-1}. $$

Since $S = \delta^{-M}$, we see that $M = \frac{ \log S} { \log \delta^{-1}}$.  Therefore,

$$ C_n^M = S^{\frac{C_n}{\log \delta^{-1}}}. $$

Now we choose $\delta > 0$ sufficiently small so that $\frac{C_n}{\log \delta^{-1}} \le \eps$.  For $S = \delta^{-M}$, we have now proven that

$$  \int_{Q_S} \prod_{j=1}^n \left( \sum_{a} T_{j,a} \right)^\frac{1}{n-1} \le S^\eps \prod_{j=1}^n N_j^\frac{1}{n-1}. $$

Finally, for an arbitrary $S \ge 1$, we can find an integer $M \ge 0$ and cover $Q_S$ with $C(\delta)$ cubes of side length $\delta^{-M}$.  Therefore, for any $S \ge 1$, we see

$$  \int_{Q_S} \prod_{j=1}^n \left( \sum_{a} T_{j,a} \right)^\frac{1}{n-1} \le C_\eps S^\eps \prod_{j=1}^n N_j^\frac{1}{n-1}. $$

This finishes the proof of Theorem \ref{maindelta} and hence the proof of Theorem \ref{main}.  

\section{Some small generalizations}

In this section, we mention two minor generalizations of Theorem \ref{main} that can be useful in applications.  

One minor generalization is to add weights.

\begin{cor} \label{weight} Suppose that $l_{j,a}$ are lines in $\RR^n$, and that each line $l_{j,a}$ makes an angle of at most $(10 n)^{-1}$ with the $x_j$-axis.  Suppose that $w_{j,a} \ge 0$ are numbers.  Let $T_{j,a}$ be the characteristic function of the 1-neighborhood of $l_{j,a}$.  Define

$$ f_j := \sum_{a} w_{j,a} T_{j,a}. $$

Let $Q_S$ denote any cube of side length $S$.  Then for any $\eps > 0$ and any $S \ge 1$, the following integral inequality holds:

\begin{equation} \label{multkakweight}  \int_{Q_S} \prod_{j=1}^n f_j^{\frac{1}{n-1}}  \le C_\eps S^\eps \prod_{j=1}^n \left( \sum_a w_{j,a} \right)^{\frac{1}{n-1}}. \end{equation}
\end{cor}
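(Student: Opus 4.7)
The strategy is to reduce Corollary \ref{weight} to Theorem \ref{main} by a tube-multiplicity argument. First I would approximate each weight from below by a rational: choose $w_{j,a}^{(k)} \in \QQ_{\geq 0}$ with $w_{j,a}^{(k)} \nearrow w_{j,a}$. The corresponding functions $f_j^{(k)} = \sum_a w_{j,a}^{(k)} T_{j,a}$ satisfy $f_j^{(k)} \nearrow f_j$ pointwise, so by monotone convergence
$$ \int_{Q_S} \prod_{j=1}^n (f_j^{(k)})^{1/(n-1)} \;\longrightarrow\; \int_{Q_S} \prod_{j=1}^n f_j^{1/(n-1)}. $$
Since the sums $\sum_a w_{j,a}^{(k)}$ are finite and converge termwise to $\sum_a w_{j,a}$, it suffices to establish \eqref{multkakweight} when every $w_{j,a}$ is rational.

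After clearing denominators (legitimate since, for each $j$, there are only finitely many weights), I may then assume $w_{j,a} = m_{j,a}/D$ for a common positive integer $D$ and nonnegative integers $m_{j,a}$. The key step is to replicate tubes: form a new family in which the tube $T_{j,a}$ appears with multiplicity exactly $m_{j,a}$, yielding $N_j' := \sum_a m_{j,a}$ tubes in direction $j$, each still making angle at most $(10n)^{-1}$ with the $x_j$-axis. The associated sum of characteristic functions is precisely $\sum_a m_{j,a} T_{j,a} = D f_j$, so applying Theorem \ref{main} to the enlarged family gives
$$ D^{n/(n-1)} \int_{Q_S} \prod_{j=1}^n f_j^{1/(n-1)} \;\leq\; C_\eps S^\eps \prod_{j=1}^n (N_j')^{1/(n-1)}. $$
Dividing by $D^{n/(n-1)}$ and using $N_j'/D = \sum_a w_{j,a}$ gives exactly \eqref{multkakweight}.

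The reason this reduction works — and the only point worth flagging — is that the constant $C_\eps$ in Theorem \ref{main} is independent of the number of tubes $N_j$, so one may freely replicate tubes without penalty. There is no serious analytic obstacle: the argument is a standard homogenization trick, and the only mildly delicate piece is the rational approximation step, which is handled cleanly by monotone convergence since both sides of \eqref{multkakweight} depend monotonically on each $w_{j,a} \geq 0$.
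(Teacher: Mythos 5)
Your argument is correct and is essentially the paper's own proof: replicate each tube according to an integer multiplicity to reduce to Theorem \ref{main}, pass from integers to rationals by homogeneity, and from rationals to reals by a limiting argument. The only cosmetic difference is that you place the approximation step first (via monotone convergence from below) rather than last (the paper says ``by continuity''), which is a perfectly fine way to organize the same reduction.
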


\begin{proof} If the weights $w_{j,a}$ are positive integers, the result follows from Theorem \ref{main} by including each tube multiple times.  (The tubes $T_{j,a}$ in Theorem \ref{main} do not need to be distinct.)  Then by scaling the theorem holds for rational weights and by continuity for real weights.
\end{proof}

In Theorem \ref{main}, we assumed that $l_{j,a}$ makes an angle less than $(10n)^{-1}$ with the $x_j$-axis.  This was simple to state, but it is not the most general condition we could make about the angles of the lines $l_{j,a}$.  Here is a more general setup that can be useful in applications.

\begin{cor}
Suppose that $S_j \subset S^{n-1}$.  Suppose that $l_{j,a}$ are lines in $\RR^n$ and that the direction of $l_{j,a}$ lies in $S_j$.  Suppose that for any vectors $v_j \in S_j$,

$$ | v_1 \wedge ... \wedge v_n | \ge \nu.$$

\noindent Let $T_{j,a}$ be the characteristic function of the 1-neighborhood of $l_{j,a}$.

Let $Q_S$ denote any cube of side length $S$.  Then for any $\eps > 0$ and any $S \ge 1$, the following integral inequality holds:

\begin{equation}  \int_{Q_S} \prod_{j=1}^n \left( \sum_{a = 1}^{N_j} T_{j,a} \right)^{\frac{1}{n-1}}  \le C(\eps) \Poly(\nu^{-1}) S^\eps \prod_{j=1}^n N_j^{\frac{1}{n-1}}. \end{equation}

\end{cor}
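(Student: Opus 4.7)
The plan is to follow the reduction in Subsection \ref{subsecreducsmallangle} almost verbatim: partition each direction set $S_j$ into small spherical caps, and use a linear change of coordinates on each resulting piece to reduce it to Theorem \ref{main}. The only new point is that the distortion of the change of coordinates is now controlled by the transversality parameter $\nu$ rather than by an absolute constant.

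Concretely, I would cover each $S_j$ by caps $S_{j,\beta}$ of radius $\rho = c_n \nu$, where $c_n > 0$ is a small constant depending only on $n$, to be fixed below. The number of caps in each $S_j$ is $\lesssim \rho^{-(n-1)} = \Poly(\nu^{-1})$. Writing $\sum_a T_{j,a} = \sum_\beta \sum_{l_{j,a} \in S_{j,\beta}} T_{j,a}$ and using the subadditivity $(\sum_\beta g_\beta)^{1/(n-1)} \le \sum_\beta g_\beta^{1/(n-1)}$ (valid because $1/(n-1) \le 1$ for $n \ge 2$), the left hand side of the claimed inequality is bounded by a sum over tuples $(\beta_1, \ldots, \beta_n)$ of cap-restricted integrals, and there are only $\Poly(\nu^{-1})$ such tuples.

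For each tuple I would pick a representative $v_j^* \in S_{j,\beta_j}$. The hypothesis $|v_1^* \wedge \cdots \wedge v_n^*| \ge \nu$ means the matrix $V$ with columns $v_j^*$ has $|\det V| \ge \nu$, so $L := V^{-1}$ is well-defined and satisfies $L(v_j^*) = e_j$. By Cramer's rule, $\|L\| \le C_n \nu^{-1}$, and for any $v \in S_{j,\beta_j}$ we get $|L(v) - e_j| \le \|L\| \rho \le C_n c_n \le (10n)^{-1}$ provided $c_n$ is chosen small enough. After applying $L$ the cube $Q_S$ is contained in a cube of side $\lesssim_n \nu^{-1} S$, and each tube $T_{j,a}$ is contained in the $\|L\|$-neighborhood of the line $L(l_{j,a})$. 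Rescaling by $\|L\|^{-1}$ turns these into genuine radius-$1$ tubes around lines making angle at most $(10n)^{-1}$ with the $x_j$-axes, sitting inside a cube of side $\lesssim_n S$. Theorem \ref{main} then yields $C_\eps S^\eps \prod_j N_j^{1/(n-1)}$, and the combined Jacobian from the change of coordinates and the rescaling is at most $|\det L^{-1}| \cdot \|L\|^n \le \Poly(\nu^{-1})$. Summing over the $\Poly(\nu^{-1})$ cap tuples absorbs into a further polynomial factor in $\nu^{-1}$.

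The only step that needs genuine care is calibrating the cap radius $\rho$ against the transversality parameter: $\rho$ must be chosen small enough in terms of $\nu$ that the directional perturbations within each cap, once inflated by $\|L\| \lesssim_n \nu^{-1}$, still land inside the small-angle regime of Theorem \ref{main}. All the other ingredients — the subadditivity bound, the Cramer's rule estimate on $\|L\|$, and the bookkeeping of Jacobians and cap counts — are routine.
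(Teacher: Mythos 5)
Your proof is correct and follows essentially the same strategy as the paper: cover each $S_j$ by small caps, change coordinates cap by cap via a linear map sending representative directions to the $e_j$'s, bound the operator norm and Jacobian via Cramer's rule using $|\det V|\ge\nu$, and sum over the $\Poly(\nu^{-1})$ cap tuples. The only minor variation is that the paper takes the cap radius $\rho=\rho(\eps)$ small enough to reduce directly to Theorem~\ref{maindelta}, whereas you insert an explicit rescaling by $\|L\|^{-1}$ so as to reduce to Theorem~\ref{main} with a cap radius depending only on $n$ and $\nu$; both routes yield the stated $C(\eps)\Poly(\nu^{-1})S^\eps$ bound.
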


\begin{proof} This estimate follows from Theorem \ref{maindelta} by essentially the same argument as in subsection \ref{subsecreducsmallangle}.  We again cover $S_j$ by caps $S_{j, \beta}$ of a small radius $\rho$.  As long as $\rho \le \frac{1}{100n} \nu$, we can guarantee that $|v_1 \wedge ... \wedge v_n |  \ge \nu/2$ for all $v_j \in S_{j, \beta}$.  We pick a sequence of caps $S_{1, \beta_1}, ..., S_{n, \beta_n}$.  We change coordinates so that the center of the cap $S_{j, \beta_j}$ is mapped to the coordinate unit vector $e_j$.  The distortion of lengths and volumes caused by the coordinate change is $\Poly(\nu^{-1})$.  
We apply Theorem \ref{maindelta} in the new coordinates.  If $\rho = \rho(\eps)$ is small enough, the image of $S_{j,\beta}$ is contained in a cap of radius $\delta = \delta(\eps)$ as in Theorem \ref{maindelta} -- and this gives the desired estimate with error factor $C_\eps \Poly(\nu^{-1}) S^\eps$.  Finally, we have to sum over $C(\eps) \Poly(\nu^{-1})$ different choices of $S_{1, \beta_1} ... S_{n, \beta_n}$.  
\end{proof}

This is not the sharpest known estimate when it comes to the dependence on $\nu$.  See \cite{BCT} or \cite{G} for sharper estimates.  However, in the applications I know, this estimate is sufficient.

\section{On Lipschitz curves}

The proof of Theorem \ref{maindelta} applies not just to straight lines, but also to Lipschitz curves with Lipschitz constant at most $\delta$.  

Suppose that $g_{j, a}: \RR \rightarrow \RR^{n-1}$ is a Lipschitz function with Lipschitz constant at most $\delta$.  We let $\gamma_{j,a}$ be the graph given by $(x_1, ..., x_{j-1}, x_{j+1}, ... , x_n) = g_{j,a}(x_j)$.  The curves $\gamma_{j,a}$ will play the role of the lines $l_{j,a}$ -- lines are the special case that the functions $g_{j,a}$ are affine.  We let $T_{j,a}$ be the characteristic function of the 1-neighborhood of $\gamma_{j,a}$.  With this setup, Theorem \ref{maindelta} generalizes to Lipschitz curves with the same proof.

\begin{theorem}
 For every $\eps > 0$, there is some $\delta > 0$ so that the following holds.  

Suppose that $\gamma_{j,a}$ are as above: Lipschitz curves in $\RR^n$ with an angle of at most $\delta$ with the $x_j$-axis.  Then for any $S \ge 1$ and any cube $Q_S$ of side length $S$, the following integral inequality holds:

\begin{equation} \label{multkakdeltalip} \int_{Q_S} \prod_{j=1}^n \left( \sum_{a = 1}^{N_j}  T_{j,a} \right)^{\frac{1}{n-1}}  \lesssim_\eps S^\eps \prod_{j=1}^n N_j^{\frac{1}{n-1}}. 
\end{equation}
\end{theorem}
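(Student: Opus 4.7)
The plan is to repeat the proof of Theorem \ref{maindelta} essentially verbatim, with the lines $l_{j,a}$ replaced by the graphs $\gamma_{j,a}$, and to verify that the two geometric inputs used inside Lemma \ref{mainlemma} are still available. The reduction of Subsection \ref{subsecreducsmallangle} is unnecessary because the statement is already in the small-angle regime, and the multiscale iteration that deduces Theorem \ref{maindelta} from Lemma \ref{mainlemma} is purely combinatorial and uses nothing about the linear structure of the tubes. So the entire task is to reprove Lemma \ref{mainlemma} in the Lipschitz setting.

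To do this I would define $T_{j,a,W}$ to be the characteristic function of the $W$-neighborhood of $\gamma_{j,a}$, set $f_{j,W} := \sum_a T_{j,a,W}$, and prove the same inequality
$$ \int_{Q_S} \prod_{j=1}^n f_{j,W}^{\frac{1}{n-1}} \leq C_n \delta^n \int_{Q_S} \prod_{j=1}^n f_{j,\delta^{-1}W}^{\frac{1}{n-1}} $$
whenever $S \geq \delta^{-1}W$. Partition $Q_S$ into subcubes $Q$ of sidelength between $(20n)^{-1}\delta^{-1}W$ and $(10n)^{-1}\delta^{-1}W$ exactly as in the straight-line proof. Two facts about each such $Q$ are all that is needed. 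First, $T_{j,a,W}\cap Q$ is contained in an axis-parallel tube $\tilde T_{j,a,2W}$ of transverse radius $2W$: the $x_j$-extent of $Q$ is at most $(10n)^{-1}\delta^{-1}W$, and since $g_{j,a}$ is $\delta$-Lipschitz its transverse variation over that range is at most $(10n)^{-1}W$, so thickening by $W$ gives an axis-parallel tube of radius $W + (10n)^{-1}W \leq 2W$. Second, if $T_{j,a,W}$ meets $Q$ then $T_{j,a,\delta^{-1}W}$ equals $1$ on all of $Q$: for $p \in Q \cap T_{j,a,W}$ and arbitrary $q \in Q$ the triangle inequality gives $\Dist(q,\gamma_{j,a}) \leq W + \Dist(p,q) \leq W + \mathrm{diam}(Q) \leq W + (\sqrt{n}/(10n))\delta^{-1}W$, which is at most $\delta^{-1}W$ for $\delta$ small.

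Given these two facts, the rest of Lemma \ref{mainlemma} goes through word for word: Loomis-Whitney applied to the axis-parallel tubes $\tilde T_{j,a,2W}$ on $Q$ bounds the integral by $C_n W^n \prod_j N_j(Q)^{1/(n-1)}$, and the second fact converts $N_j(Q)$ into the average of $f_{j,\delta^{-1}W}$ over $Q$. The multiscale iteration, the choice of $\delta$ making $C_n/\log\delta^{-1} \leq \eps$, and the covering argument for general $S \geq 1$ then conclude the proof exactly as in Theorem \ref{maindelta}. There is essentially no obstacle here — the two geometric facts above were the only uses of the affine structure of $l_{j,a}$ in the original argument, and both depend only on the Lipschitz constant of $g_{j,a}$.
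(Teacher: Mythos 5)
Your proposal is correct and is exactly the argument the paper has in mind: the paper itself simply asserts that Theorem \ref{maindelta} ``generalizes to Lipschitz curves with the same proof,'' and your write-up spells out precisely why, identifying the two geometric facts inside Lemma \ref{mainlemma} (containment of $T_{j,a,W}\cap Q$ in an axis-parallel tube of radius $2W$, and $T_{j,a,\delta^{-1}W}\equiv 1$ on $Q$ whenever $T_{j,a,W}$ meets $Q$) and verifying that they depend only on the Lipschitz constant $\delta$ of $g_{j,a}$, not on affineness.

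One tiny imprecision worth noting but not a gap: in verifying the first fact, the nearest point of $\gamma_{j,a}$ to a point $x\in T_{j,a,W}\cap Q$ may have $x_j$-coordinate lying up to $W$ outside the $x_j$-range of $Q$, so the transverse variation to control is over an interval of length $(10n)^{-1}\delta^{-1}W+2W$ rather than $(10n)^{-1}\delta^{-1}W$; this only adds $2\delta W$ to the radius and the bound $\leq 2W$ still holds for $\delta$ small.
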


This multilinear estimate for Lipschitz curves has a similar flavor to some estimates of Cs{\H o}rnyei and Jones described in \cite{CJ}.  


\end{document}